\documentclass[11pt]{article}
\usepackage{amsfonts} 
\usepackage{amsmath}
\usepackage{amsthm}
\usepackage{graphicx}
\usepackage{lineno}
\usepackage{hyperref}
\modulolinenumbers[5]

\newtheorem{theorem}{Theorem}[section]

\newtheorem{lemma}[theorem]{Lemma}
\newtheorem{case}{Case}

\begin{document}

\title{The 4-move kills the Alexander polynomial\thanks{I started this paper in October of 2020 as a response to the Covid-19 lockdown imposed at the time. I worked on it for about a month back then, as a distraction from working in a lockdown. I picked it up again in October 2025 in my last month at the Institute of Labor Economics. As I realize five years later the proof was practically finished back in 2020.}}
\author{Nikos Askitas\\Bonn}
\date{}

\maketitle

\begin{abstract}
Whether or not the 4-move is an unknotting operation remains an open problem. In this paper I show that every knot can be reduced to one with a trivial Alexander polynomial via a sequence of $4$-moves and isotopies.
\end{abstract}

\noindent\textbf{Keywords:} 4-move; unknotting operation; Alexander polynomial

\bigskip

\nolinenumbers

\section{Introduction}

The $4$-move is defined as in Fig. \ref{fig:4move} below. It is an unsettled, as of now, conjecture of Yasutaka Nakanishi since 1979 that the $4$-move is an unknotting operation. 
\begin{figure}[!htb]
	\centering
  \includegraphics[width=.3\textwidth]{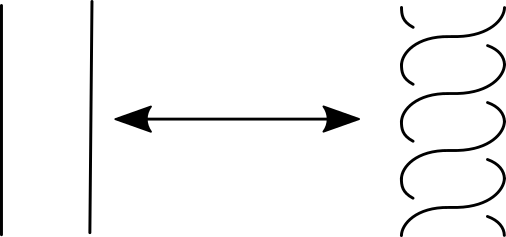}
	\caption{The 4-move }
	\label{fig:4move}
\end{figure}
An excellent account of the complete story so far can be found in \cite{joseph}. In short the conjecture, posted in 1994 by J. Przytycki as Conjecture 2.2 (a) in Rob Kirby's list of Problems in Low-Dimensional Topology, is known to hold for many knots including knots of up to 12 crossings. For many years the $2$-cable of the trefoil was the smallest potential counterexample to the conjecture. I managed, however, to unknot it in 1999 in \cite{nikos1}. Answering a question by J. Przytycki I then drew the tangle analogue of the unknotting movie in \cite{nikos2} and proposed the $2$-cable of the figure eight knot as the smallest potential counterexample which still stands open. In this paper building on \cite{nikos1, nikos2, effie} I prove that every knot can be reduced to a knot with trivial Alexander polynomial using $4$-moves:
\begin{theorem} \label{theorem:main}
For every knot $K$ there is a knot $K^{'}$ obtained from $K$ by a finite sequence of $4$-moves and isotopies so that the Alexander polynomial $\Delta(K^{'})$ is trivial.
\end{theorem}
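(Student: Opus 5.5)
The plan is to translate the problem into Seifert matrices. I will find a class of $4$-moves whose effect on a Seifert matrix is easy to control, and then settle an algebraic question about which matrices can be reached. Throughout, $\Delta$ is the normalized Alexander polynomial and "trivial" means $\Delta \doteq 1$.

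\emph{Step 1 (geometric realization).} Put $K$ in disk--band form: a Seifert surface $F$ of genus $g$ consisting of a disk with $2g$ bands attached, with cores $\gamma_1,\dots,\gamma_{2g}$ forming a symplectic basis of $H_1(F)$. Let $V=(\mathrm{lk}(\gamma_i,\gamma_j^+))$ be the Seifert matrix, so that $V-V^T=J$ is the standard symplectic form.

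Two families of local $4$-moves leave $F$ a Seifert surface of the new knot.
\begin{itemize}
\item A $4$-move on the two antiparallel edges of band $i$ inserts two full twists into that band. This changes $V_{ii}$ by $\pm 2$ and nothing else.
\item Where band $i$ passes over band $j$, a $4$-move on one edge of each band, chosen with compatible orientation, wraps band $i$ twice around band $j$. This changes $V_{ij}$ and $V_{ji}$ by the same amount $\pm 2$.
\end{itemize}
I expect both claims to follow from a picture and a linking-number count. This is the part I would draw carefully, using the tangle pictures of \cite{nikos1,nikos2}. Granting them, $4$-moves realize $V\mapsto V+2S$ for every integral symmetric $S$.

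We may also use isotopy and S-equivalence freely. Concretely, this means changing basis by $P\in Sp(2g,\mathbb Z)$ and stabilizing by adding a trivial tube.

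\emph{Step 2 (algebraic target).} The invariants of the allowed operations are $V \bmod 2$, up to symplectic congruence and stabilization. So the task is to produce a matrix $W\equiv V \pmod 2$, possibly after stabilization, with $\det(tW-W^T)\doteq 1$. My candidate shape for $W$ is block upper triangular with diagonal $2\times 2$ blocks $\begin{pmatrix}a_k&1\\0&c_k\end{pmatrix}$ and $a_kc_k=0$. For such a block, $\det(tB-B^T)=a_kc_k(t-1)^2+t\doteq 1$. Since the total determinant is the product over the blocks, $\Delta(W)\doteq 1$.

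Modulo $2$ we are only allowed to adjust the entries below the block diagonal together with their transposes. This is where the real work lies. I will use symplectic row and column operations over $\mathbb Z$ to clear the off-block pairs $V_{ij}+V_{ji}$ modulo $2$ below the block diagonal. This leaves the lower-left part even, which $2S$ then kills.

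\emph{Main obstacle.} The difficulty is the diagonal blocks. Modulo $2$, each block carries the quadratic data $(a_k,c_k) \bmod 2$. If both $a_k$ and $c_k$ are odd, the block contributes $(t-1)^2+t$ up to even corrections, and this is not trivial. This is exactly an Arf-type obstruction: the quadratic form $q(x)=x^TVx \bmod 2$ is invariant under $V\mapsto V+2S$.

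My first attempt to get around it will be to stabilize. I will add a hyperbolic block and use a symplectic transvection mixing the bad block with the new one, so that in the new basis each block has at least one even diagonal entry. This works whenever it suffices to pair bad blocks off, i.e.\ whenever the Arf invariant can be absorbed.

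If a single bad block remains, I would instead exploit the full integral freedom rather than working only modulo $2$. The point is to choose the even parts of $a_k$, $c_k$ and the off-diagonal entries so that the determinant telescopes to a unit even though the matrix is not block triangular. This is the step I expect to be hardest, and it is where the specific constructions of \cite{effie} would have to enter.
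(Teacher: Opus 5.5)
\textbf{There is a genuine gap.} Your toolkit cannot reach the goal for many knots, and the step you call hardest is impossible rather than hard.

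Every operation you allow leaves $\det(tV-V^{\top})$ unchanged modulo $2$, up to units. The move $V\mapsto V+2S$ alters $tV-V^{\top}$ by the even matrix $2(t-1)S$, and symplectic base change and stabilization are S-equivalences. So $\Delta_K \bmod 2$ (equivalently $\nabla_K \bmod 2$) is invariant under your moves, whereas a trivial polynomial has $\Delta\equiv 1$. Hence the method fails whenever $\Delta_K\not\equiv t^k \pmod 2$.
\begin{itemize}
\item This excludes the trefoil ($\Delta\equiv t^2+t+1$), which is your case of a single bad block.
\item It also excludes the Arf-zero knot $3_1\#3_1$ ($\Delta\equiv t^4+t^2+1$), so pairing bad blocks off fails as well. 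In any (stabilized) symplectic basis with only good blocks, some off-block entries $V_{ij}=V_{ji}$ must be odd, and $2S$ cannot change their parity.
\end{itemize}
The Arf invariant you spotted is just the $z^2$-coefficient of this mod-$2$ obstruction. Choosing the even parts of the entries cleverly keeps you in the same class mod $2$, so it cannot help.

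The same computation shows that no strategy built only from $4$-moves on antiparallel strands can work. Such a move is $\pm\tfrac12$-surgery on a circle $c$ with $\mathrm{lk}(c,K)=0$. This circle misses a suitably tubed Seifert surface, so the Seifert form changes by $\pm2LL^{\top}$ with $L=(\mathrm{lk}(x_k,c))_k$, an even symmetric perturbation.

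\textbf{The missing idea.} You need a $4$-move that changes Seifert entries by \emph{odd} amounts. By the above it must act on parallel strands, for example $\widehat{\sigma_1^{3}}\to\widehat{\sigma_1^{-1}}$, which takes the trefoil to the unknot.

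The paper gets such moves from the special shape of knots $k(G)$ coming from contained $n$-graphs. There each edge yields a hooked band and a handle pair $(v_i,u_i)$.
\begin{itemize}
\item The clasp of the hook lets the framing move (Lemma~\ref{lemma:clasp}) set $\theta(v_i,v_i)$ to \emph{any} integer.
\item The $2$-move on bands (Lemma~\ref{lemma:band_twist}) flips the parity of $\theta(v_i,v_j)$ and $\theta(v_j,v_i)$, at the cost of a new hooked band.
\item Only the $4$-move on bands is an even perturbation of the kind you use.
\end{itemize}
These parity changes are what let the paper reach the matrix of Lemma~\ref{lemma:seifert} before the determinant computation.

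\textbf{Two smaller points.}
\begin{itemize}
\item Your second bullet is not a single $4$-move. Twisting one edge of band $i$ with one edge of band $j$ involves only two of the four strands, and the small twisting circle meets $F$. This is why the paper needs the hooks near the feet for Lemma~\ref{lemma:feet}.
\item The product formula for $\det(tW-W^{\top})$ requires $W$ to be block diagonal, not merely block triangular. The condition $W-W^{\top}=J$ forces block diagonality anyway.
\end{itemize}
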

The proof is based on the fact that knot projections can be constructed out of certain trivalent graph projections which were used in \cite{effie}. In Section \ref{section:geometric} I prove several elementary geometric lemmas about the effect of $4$-moves on such knot projections which on the one hand make the projections more complicated (i.e. introduce potentially many more crossings) but on the other hand simplify the Seifert pairing obtained from the associated Seifert surface. In Section \ref{section:alexander} I discuss the consequences of the geometric tricks for the Seifert pairing and prove the mail result.

\newpage
\section{Knots projections from trivalent graphs and the $4$-move} \label{section:geometric}
I begin this section with a short summary of Section 3 of \cite{effie}. Let $\mathcal{K}$ be the set of all isotopy classes of oriented knots. In $\mathbb{R}^3$ parameterized by rectangular coordinates $(x,y,z)$ consider the plane $R=\{(x,y,0) : x,y \in \mathbb{R}\}$ and the unit circle in $R$ centered at the origin. For $n \in \mathbb{N}$, an $n$-graph $G$ is a trivalent graph with $2n$ vertices obtained by attaching edges $e_1,\ldots,e_n$ to $S$ as on the left hand side of Fig. \ref{fig:kmap} so that the entire graph lies on $R$ everywhere except when it is necessary to leave $R$ to avoid self intersections. The graph $G$ is weighted by a pair $(w_i,z_i)$ with $w_i \in \mathbb{N}$ and $z_i \in \{ +1,-1\}$. The moves in Fig. 3 of \cite{effie} (which then correspond to knot isotopies) create the set $\mathcal{G}$ of equivalence classes of such graphs. There is a surjective (\cite{suzuki}) map $k: \mathcal{G} \rightarrow \mathcal{K}$ defined as in Fig. \ref{fig:kmap}. 
\begin{figure}[!htb]
	\centering
  \includegraphics[width=.6\textwidth]{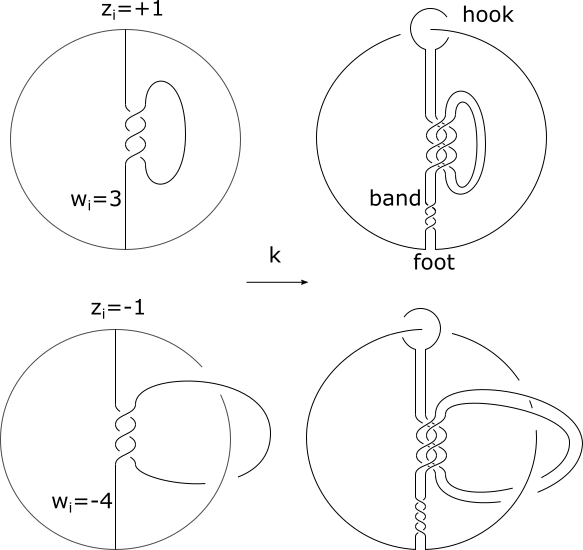}
	\caption{The map $k$ maps $n$-graph projections (left) to knot projections (right) taking into account arc framing and hook sign: a contained graph (top) and a non-contained one (bottom). }
	\label{fig:kmap}
\end{figure}

The degrees of freedom available to us when we create knot projections from $n$-graphs involve the number of edges of the graph, the placement of the attaching feet of these edges on the circle $S$ (the "attaching data"), the entanglement of these edges with themselves, the other edges and the circle $S$ and finally the weights $(w_i, z_i)$ for each edge.

A graph will be called {\bf contained} if there are no crossings involving any of its edges with $S$ in which case the edges can be isotoped in the interior of $S$. The top graph of Fig. \ref{fig:kmap} is contained while the bottom one is not. The following lemma shows that every knot is in the $k$-image of a contained graph (with possibly more edges).

\begin{lemma}\label{lemma:contained}
Every knot has a projection $k(G)$ of a contained graph $G$.
\end{lemma}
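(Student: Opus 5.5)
The plan is to start from an arbitrary $n$-graph $G$ with $k(G)=K$, which exists by surjectivity of $k$ (\cite{suzuki}). I would then remove the crossings between edges and the circle $S$ one at a time, using only the equivalence moves of Fig. 3 of \cite{effie}, plus one new local move that adds an edge. Each such move corresponds to an isotopy of the knot, so the knot type never changes. The quantity to decrease is the number $c(G)$ of crossings in the projection between an edge $e_i$ and the circle $S$. When $c(G)=0$ the graph is contained, and the lemma follows.

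The key local step is the following. Consider a crossing where some edge $e_i$ passes over or under an arc of $S$. Geometrically, $k(G)$ is a band surface: a disk bounded by $S$ with bands attached along the edges, with framing $w_i$ and hook sign $z_i$. A crossing of $e_i$ with $S$ means the band $e_i$ passes over or under the boundary of the disk. I would resolve it by \emph{splitting} the band. Slide a small piece of the disk's boundary along $e_i$ to the crossing point; equivalently, perform a finger move of $S$ along $e_i$. Then $e_i$ is cut into two edges $e_i', e_i''$ whose new feet lie on $S$ at either side of the former crossing. In the knot projection this is a handle slide of the bands over the disk, which is an isotopy of the knot. The effect is to trade one edge--$S$ crossing for one extra edge. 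The new edges keep the crossings $e_i$ had with the other edges. The weights must be distributed as follows: the framing $w_i$ and sign $z_i$ go to one of the two pieces, and the other piece gets a weight which makes the local picture of $k$ agree, typically the trivial framing with the appropriate sign. I would verify this by drawing the picture of Fig. \ref{fig:kmap} before and after, for both the over-crossing and the under-crossing case.

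After the split, the new edge configuration has one fewer crossing with $S$. Induction on $c(G)$ then finishes the proof, since the number of edges increases, which the statement allows ("with possibly more edges"). At the end every edge avoids $S$ in the projection. Such edges can be pushed into the interior of $S$ by a planar isotopy of $R$ that fixes $S$, moving the edges off the exterior. This is exactly the definition of contained.

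The main obstacle I expect is the bookkeeping of the weights $(w_i,z_i)$ and the hook sign at the new feet. I need to check that the framing of the band, meaning the twisting contributed by the crossing with $S$, is correctly absorbed, so that $k$ of the new graph really is isotopic to $k$ of the old one. A second subtlety is that after the split the new feet may interleave with other feet on $S$. This should be harmless, since the attaching data are free. If the direct splitting picture misbehaves, my fallback is to first use the moves of \cite{effie} to slide the foot of $e_i$ along $S$ through the crossing point. That reduces $c(G)$ whenever the crossing is adjacent to a foot along the edge. The inductive step can always be arranged this way by choosing the crossing closest to a foot of its edge.
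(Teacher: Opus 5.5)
Your proposal has a genuine gap: the splitting step, which carries the whole induction, is not an isotopy. A finger move of $S$ along $e_i$ is an isotopy, but it deforms $S$ itself. The segment $e_i[p,c]$ becomes part of the new ``circle'' rather than a separate edge. To return to a graph on the round circle $S$ you must retract the finger, which drags the foot of $e_i''$ back to $p$ and undoes the split.

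Nor can the split be a handle slide. Handle slides preserve the number of bands, while your move adds an edge, i.e.\ an extra hook (clasp), and nothing in your argument compensates for it. The discrepancy is not local, so no choice of weights on the new piece repairs it. An extra zero-framed hook can be discarded when its edge retracts to a short unentangled arc, but not in general: on a knotted edge it gives an untwisted Whitehead double, which is nontrivial.

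The natural way to compare the two pictures is to slide the foot of $e_i''$ along $S$ from $c''$ to $p$, sweeping across the disk bounded by $e_i[p,c]$, an arc of $S$ and a short vertical segment at $c$. Your fallback (sliding $p$ along $S$ to $c$) is the same slide. It is an isotopy only if that disk misses the rest of the graph. It fails when $e_i[p,c]$ crosses other edges, or when another edge enters the swept region under $S$ and leaves it over $S$. Choosing the crossing closest to a foot excludes neither.

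A further warning sign is that your move treats over- and under-crossings alike. Over-crossings need no new edges at all: if every edge lies over $S$, you can lift the edges above the plane and swing them inside. Only under-crossings are a real obstruction.

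The missing idea is how to compensate for an under-crossing. Turning an under-crossing of a band $b$ with $S$ into an over-crossing changes the crossings of the two strands of $b$ with $S$. This can be compensated by adding two new hooked bands attached to $b$ (Fig.~\ref{fig:lift}). You then slide the feet of these new bands along $b$ back onto $S$, treating the under-crossings in order from the foot of $b$ towards its hook. This way the new bands run parallel to the already-lifted part of $b$ and never under-cross $S$. Once every band lies over $S$, the graph is contained.

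Your final step is also incorrect as stated. A planar isotopy of $R$ fixing $S$ preserves the inside and outside of $S$, so it cannot move an edge lying entirely outside $S$ to the inside. Such an edge has to be lifted over $S$ in $\mathbb{R}^3$.
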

\begin{proof} Let $K$ by any knot and let $G$ be a graph such that $k(G)=K$. If all edges (bands)\footnote{We can think either in terms of $G$ isotopies or band isotopies in $K$.} are projected on top of $S$ then we are done. For any band $b$ which under-crosses $S$ proceed as follows. At each under-crossing  of $b$ with $S$ lift the band above $S$ at the expense of creating two new bands whose feet are attached to the band $b$ as in Fig. \ref{fig:lift}. 

\begin{figure}[!htb]
	\centering
  \includegraphics[width=.7\textwidth]{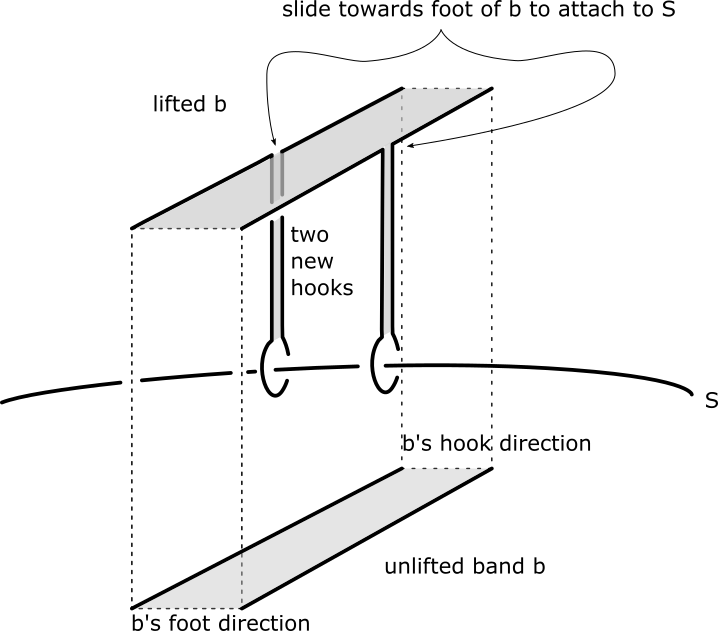}
	\caption{A band's $b$ under-crossing of $S$ becomes an overcrossing at the expense of adding two new bands attached to it which can then be slid onto $S$ along $b$.}
	\label{fig:lift}
\end{figure}

Now slide the feet of these newly introduced bands along the band $b$ towards its foot and onto $S$. If we do this in order of appearance of under-crossings of $b$ as we traverse it from its foot towards its hook then the newly introduced bands will move parallel to the segment of $b$ lying above $S$ and never under-cross $S$. The graph we are seeking is now the pre-image of this new knot projection\footnote{The cost of assuming the Graph is contained is of course that it (and the corresponding knot projection) get more complicated and have more bands/edges.}. 
\end{proof}

We will from now on always assume our graphs are contained. We prove a series of geometric lemmas which we will use to construct a well behaved Seifert surface. The following lemma is self evident and is a straightforward generalization of the kinds of moves I used in \cite{nikos1, nikos2}:

\begin{lemma}\label{lemma:clasp}
Using 4-moves we can swap the clasp of a hook as well as add any number of positive or negative half twists to its band. We will call such a move a {\bf framing move.} A framing move is thus the aggregation of a number of $m$-moves and isotopies. 
\end{lemma}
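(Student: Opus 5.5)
The plan is to prove both claims by working locally, inside a small ball that meets the knot $k(G)$ only in the hook of one band. The first step is to fix what a hook looks like there. By Fig.~\ref{fig:kmap}, the hook of the band $b_i$ coming from the edge $e_i$ consists of two parallel strands of the band that clasp around a strand of the circle $S$, or of another band. The clasp has two crossings, and $z_i$ records its sign. The band itself carries $w_i$ half twists. Every object we need to change is therefore a pair of antiparallel or parallel strands twisted or clasped together. The main tool is the observation that a full twist on two parallel strands, repeated twice, is exactly the $4$-move of Fig.~\ref{fig:4move}.

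For the twisting claim, look at the two boundary strands of the band $b_i$ in a small disk where the band is flat. Inserting four half twists of the same sign is a single $4$-move, applied in one direction or the other. This lets us change $w_i$ freely modulo $4$. To realise an arbitrary change of the half-twist count, we first apply isotopies that trade twists of the band for writhe, that is, curls of the core of the band. Concretely, a half twist of a band can be moved into or out of a kink by a Reidemeister~I type isotopy of the band, and two half twists are equivalent to a full curl of the core. Combining this with $4$-moves, we aim to show that any number of positive or negative half twists can be added, up to isotopy and an adjustment absorbed elsewhere, in the way used in \cite{nikos1, nikos2}.

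For the clasp swap, note that the clasp is two crossings between the hooking strand and the strand being hooked. Changing $z_i$ to $-z_i$ amounts to inserting a full twist of opposite sign between those two strands near the clasp. Here we apply a $4$-move to the two strands passing through the clasp region: that region looks like two crossings of one sign, and adding $-4$ crossings leaves two crossings of the opposite sign. This reverses the clasp, up to a planar isotopy of the hook. Since the move is supported in a ball meeting only these two strands, it changes nothing else in the projection and preserves containment of $G$.

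The main obstacle is the half-twist claim, because the $4$-move only changes the number of twists by multiples of $4$. Getting one or two half twists therefore needs the isotopy in which twists are exchanged for the clasp configuration of the hook. The clasp swap itself is, in effect, a way of shifting twisting between the band and the clasp. The plan is to first pin down carefully how the hook of Fig.~\ref{fig:kmap} is drawn. After that, the intended argument is that rotating the hook by $\pi$ about the band axis is an isotopy that adds one half twist to the band while moving the clasp. Combining this rotation with the clasp swap would give an arbitrary change of $w_i$. I expect this bookkeeping to be the only genuinely delicate step. Each framing move is then, by construction, a finite composition of $4$-moves and isotopies.
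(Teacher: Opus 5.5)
\textbf{The clasp swap is fine; the half-twist claim has a genuine gap.}

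Your clasp swap is correct and is the paper's argument: the top row of Fig.~\ref{fig:clasp} is a single 4-move. The clasp is a two-crossing twist region between the tip of the band and the hooked strand, and inserting four crossings of the opposite sign leaves the opposite clasp. Your phrase ``a full twist of opposite sign'' is off by a factor of two, since one full twist would simply unhook the band.

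The gap is in the half-twist claim, which is the only non-obvious part of the lemma and which you leave as an ``intended argument''.

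\emph{Your first route cannot work.} Isotopies of the band preserve its framing. A kink in the core of a band is worth a full twist, never a single half twist. So 4-moves inside the band combined with twist/writhe trading only change $w_i$ by multiples of $4$.

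\emph{Your second route points the right way, but as stated it is not an isotopy.} The hooked strand threads the tip of the band. Turning the tip rigidly by $\pi$ about the band axis, with that strand held fixed, reverses the tip's local linking with it, so the tip must pass through the strand. Suppose instead you rotate a ball containing the tip together with the hooked arc. Then the collar picks up a half twist of the band and also a half twist between the two ends of the hooked arc; this is just a flype. Removing that second half twist is exactly the step you have not done.

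\textbf{The missing local identity.} Work in a ball meeting the knot in the tip of the band and an arc of the hooked strand. The identity is: a clasp of sign $\varepsilon$ followed by one half twist of the band, of a suitable sign, is isotopic rel boundary to the clasp of sign $-\varepsilon$ with no half twist.

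One quick check uses rational tangles:
\begin{itemize}
\item Place the square so that the hooked arc joins the two top endpoints and the band tip joins the two bottom ones. Then the hook is the rational tangle with fraction $\pm 2$.
\item A half twist of the band twists the two bottom endpoints, so it changes $1/F$ by $\pm 1$.
\item Since $-\tfrac12+1=\tfrac12$, the two tangles have the same fraction, and so they are isotopic rel boundary.
\end{itemize}
In twist knots this is the familiar fact that reversing the clasp has the same effect as changing the number of half twists by one.

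\textbf{Finishing the argument.} Write (clasp $\varepsilon$, $w$) for a hook with clasp sign $\varepsilon$ and $w$ half twists on its band.
\begin{itemize}
\item The clasp-swap 4-move takes (clasp $\varepsilon$, $w$) to (clasp $-\varepsilon$, $w$). The identity rewrites this as (clasp $\varepsilon$, $w\pm1$).
\item Applying the identity before the 4-move instead gives the other sign.
\end{itemize}
So each half twist costs one 4-move, and iterating gives any number of half twists of either sign. This is what the bottom row of Fig.~\ref{fig:clasp} records; compare the use of a hook ``to get rid of a half twist'' in Fig.~\ref{fig:feet_example}.
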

\begin{proof} The reader can easily verify that the top of Fig. \ref{fig:clasp} is simply a 4-move while the bottom is 4-moves and isotopies. 
\begin{figure}[!htb]
	\centering
  \includegraphics[width=.7\textwidth]{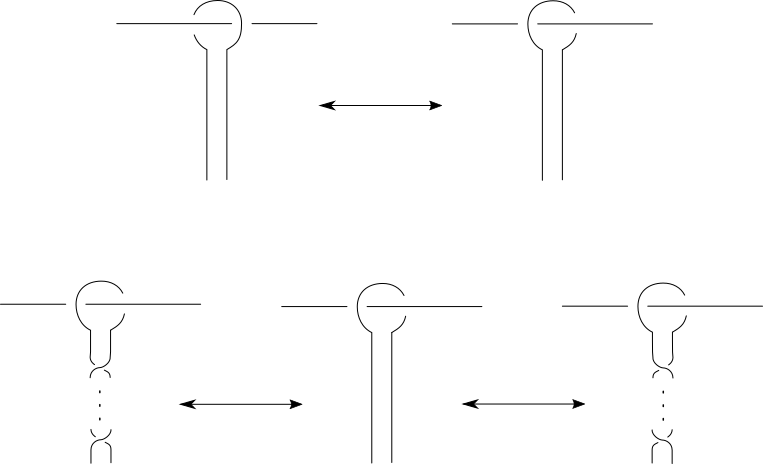}
	\caption{We can change the sign of a clasp by a 4-move and we can add any number of half twists of any sign to the band of a clasp by repeated 4-moves and isotopies.}
	\label{fig:clasp}
\end{figure}
\end{proof}

One way to rephrase this lemma is to say that the weights of the graph G do not affect the $4$-move equivalence class of $k(G)$. The attentive reader, who has Fig. \ref{fig:symplectic}  in mind will have realized at this point that Lemma \ref{lemma:clasp} settles our main theorem for knots constructed from contained 1-graphs. Our second lemma is a wholesale aggregation of $4$-moves (in fact applications of Lemma \ref{lemma:clasp}) and isotopies and will be used to prove two final geometric tricks based on $4$-moves before we move onto the algebraic consequences of our moves:

\begin{lemma}\label{lemma:feet_reduction}
The 4-move induces a move as in Fig. \ref{fig:feet_reduction}:
\begin{figure}[!htb]
	\centering
  \includegraphics[width=.8\textwidth]{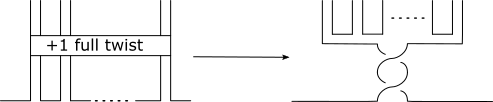}
	\caption{The 4-move applied near the feet of bands}
	\label{fig:feet_reduction}
\end{figure}
\end{lemma}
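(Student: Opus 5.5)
We cannot see Fig.~\ref{fig:feet_reduction}, so we take it to show the following local picture. Two bands $b_1,b_2$ of $k(G)$ have adjacent feet on $S$, and one of these feet passes over (or interlocks with) the other near $S$. The move replaces this configuration by one in which the two feet are exchanged or separated along $S$, at the cost of extra twisting in the bands, or of a band slid along the other. The plan is to show that this move factors into an application of Lemma~\ref{lemma:clasp} followed by isotopies, so that it is a $4$-move consequence in the sense the paper calls a "wholesale aggregation".

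First, we isolate a small disk $D$ in the projection plane. It should contain the two feet and a short arc of $S$, and meet $k(G)$ in a rational tangle. Since $G$ is contained, we may assume no other band enters $D$. In $D$ we identify the crossing (or pair of crossings) that distinguishes the two sides of Fig.~\ref{fig:feet_reduction}. The key observation is that a foot of a band on $S$, viewed inside $D$, is isotopic to a pair of parallel strands that cross the other band's strands in a twist region.

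Second, we isotope the left side of the figure, by sliding one foot along $S$ past the other, until this twist region appears as four consecutive half twists on two strands, or as a clasp with twisted band. This is exactly the local model of Fig.~\ref{fig:4move} and Fig.~\ref{fig:clasp}. We then apply a $4$-move, or a framing move from Lemma~\ref{lemma:clasp}, to remove the twists or reverse the clasp. Any leftover half twists created on the bands we discard using Lemma~\ref{lemma:clasp}, since framings of hooks are irrelevant up to $4$-moves.

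Third, we isotope back, sliding the feet to their final positions, and check that the diagram agrees with the right side of Fig.~\ref{fig:feet_reduction}. Throughout, we must verify that the orientations of the two feet are compatible with the knot orientation, so that the band structure and the graph picture are preserved. If not, we handle the reversed case symmetrically by reflecting $D$.

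The main obstacle is the second step: finding the explicit isotopy that exhibits the four half twists inside $D$, given only the pictorial statement. We would first try drawing the two feet as a $2$-braid with $S$ and computing the tangle fraction of each side. Two rational tangles whose fractions differ by $\pm 4$ in the appropriate slot differ by a single $4$-move. If the fractions instead differ by a framing change, Lemma~\ref{lemma:clasp} absorbs the difference.
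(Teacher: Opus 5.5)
There is a genuine gap. You never pin down the move, and the step you flag as the ``main obstacle'' is the whole content of the lemma. The paper's own text shows what Fig.~\ref{fig:feet_reduction} concerns. The caption of Fig.~\ref{fig:feet_example} compares the outcome with ``a $-1$ full twist of the bands''. The remark after the proof speaks of transferring ``the full twist from the bands to the circle $S$''. The proof of Lemma~\ref{lemma:feet} changes ``the sign of the full twist on the right hand side'' by a single $4$-move. So the move is about a full twist of a group of bands next to their feet, not about feet passing over one another or being exchanged along $S$.

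Such a twist is not a two-strand twist region: a full twist of two bands is a braid on four strands. So there is nothing for a $4$-move or framing move to act on directly. Your fallback also cannot work. A disk containing two adjacent feet and the arc of $S$ between them meets $k(G)$ in three arcs with six endpoints, not in a two-string (rational) tangle, so there is no tangle fraction to compare.

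The missing idea is the one in Fig.~\ref{fig:feet_example} and the remark after the proof. First, push the full twist of the bundle of bands through their feet onto $S$; this is an isotopy (rotate the part of the disk bounded by $S$ that carries the feet). It replaces the twist between the bands by a full twist on $S$ near the feet, which is a genuine two-strand twist. The price is that it adds twists to each individual band. Because every band ends in a hook, Lemma~\ref{lemma:clasp} removes those twists; this is exactly what arrows 2, 4, 5 and 7 of Fig.~\ref{fig:feet_example} do. What remains is a full twist on two strands, where an honest $4$-move can act; the proof of Lemma~\ref{lemma:feet} uses it to change the sign of that twist. The case of more bands then follows by induction.

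You do invoke Lemma~\ref{lemma:clasp}, but only as clean-up. In the actual argument the hooks do the essential work: they absorb the per-band twisting that the transfer isotopy necessarily creates. Without that, the twist between the bands is never brought to a place where a $4$-move applies.
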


\begin{proof}
Fig. \ref{fig:feet_example} conveys the flavor of the mechanics of the proof. 
\begin{figure}[!htb]
	\centering
  \includegraphics[width=.6\textwidth]{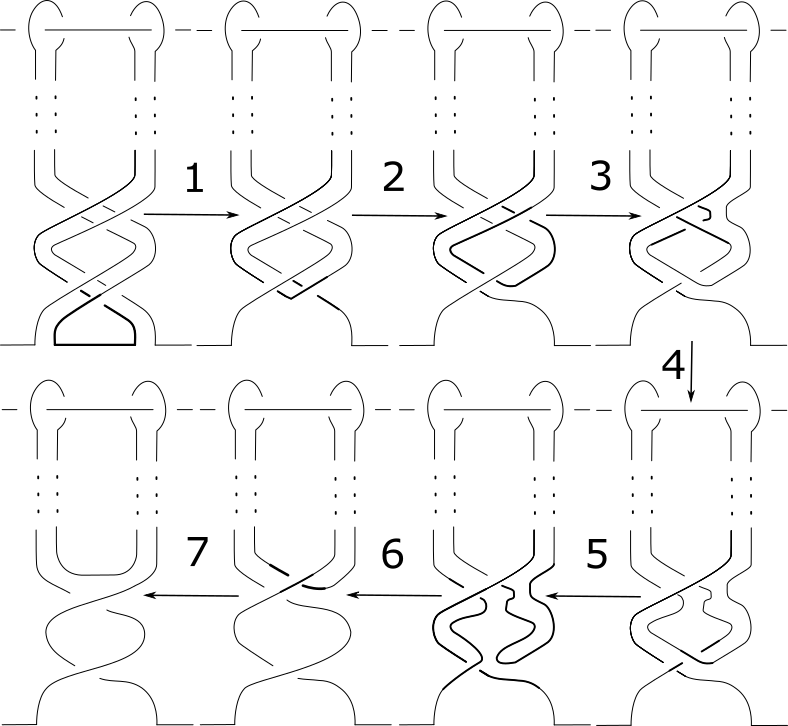}
	\caption{Arrow 1 is simply isotopy. Arrow 2 uses the hook of the right hand side band to get rid of a half twist on it. Arrow 3 is isotopy. Arrows 4 and 5 use the hook of the left hand side band to get rid of two half twists on it. Arrow 6 is isotopy and arrow 7 use the right hand side hook again. The final outcome is one 4-move away from what we would get if we started from a -1 full twist of the bands.}
	\label{fig:feet_example}
\end{figure}
The reader can easily see that this generalizes by simple induction.
\end{proof}

Lemma \ref{lemma:feet_reduction} can also be proven by observing that transferring the full twist from the bands to the circle $S$ as shown in Fig. \ref{fig:feet_example} is simply isotopy. Such an isotopy would then add twists to the bands which we eliminate by using Lemma \ref{lemma:clasp}.  

\noindent Lemmas \ref{lemma:band_twist} and  \ref{lemma:feet} below are straightforward consequences of Lemma \ref{lemma:feet_reduction}

\begin{lemma}\label{lemma:band_twist}
Using 4-moves and isotopies we can add positive (negative) full twists on any number of bands at the expense of introducing a new band each time as in Fig. \ref{fig:band_twist}. We will call such a move a {\bf 2-move on bands}.
\begin{figure}[!htb]
	\centering
  \includegraphics[width=.6\textwidth]{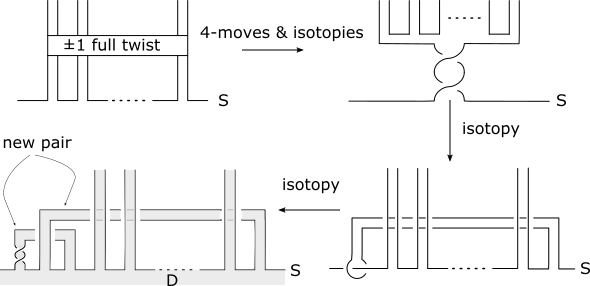}
	\caption{By using 4-moves and isotopies we can add a full twist between any two number of bands by adding a new pair.  }
	\label{fig:band_twist}
\end{figure}
\end{lemma}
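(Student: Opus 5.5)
The plan is to read Lemma \ref{lemma:band_twist} as Lemma \ref{lemma:feet_reduction} run backwards. Suppose we are given a collection of parallel bands $b_1,\ldots,b_m$ passing through a small disk $D$ in the projection. We want to insert a full twist (of either sign) on this bundle inside $D$. We first use an isotopy to introduce, next to the bundle, a new band $b_0$ whose feet sit on $S$ and whose hook is a trivial clasp. Concretely, we push out a small finger of $S$ alongside the bundle and split it into a pair of new feet; this is the ``new pair'' of Fig. \ref{fig:band_twist}. After this step the bundle $b_0,b_1,\ldots,b_m$ looks locally like the configuration near feet on $S$ that appears in Fig. \ref{fig:feet_reduction}.

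Next we apply Lemma \ref{lemma:feet_reduction} to this configuration in reverse. That lemma says that a full twist of a group of bands near their feet on $S$ is $4$-move equivalent to the untwisted configuration with a modified attaching pattern. Equivalently, as remarked after its proof, a full twist of the bands can be transferred to the circle $S$ by isotopy, and the resulting extra half twists on individual bands are removed with Lemma \ref{lemma:clasp}. Reading this equivalence from right to left, we can produce a full twist on $b_0,\ldots,b_m$ together. The twist that $b_0$ itself picks up, and any self-twisting it induces on the other bands, only changes framings and clasp signs. By Lemma \ref{lemma:clasp} these framing changes are invisible up to $4$-moves, so we normalize all the weights back to their original values. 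Doing this for each desired full twist, and choosing the sign of the twist in Lemma \ref{lemma:feet_reduction} accordingly, gives the positive or negative version of the move, one new band per twist.

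The main obstacle is making precise that the bands $b_i$ passing through $D$ can be treated as if their feet were on $S$. Lemma \ref{lemma:feet_reduction} is stated near the feet, while the bands in $D$ may be far from $S$. To handle this, I would first slide the bundle together with the new band $b_0$ along a path to $S$. Since the graph is contained (Lemma \ref{lemma:contained}), such a path can be chosen in the interior of $S$ and crossing only over or under other bands, which is an isotopy. We then perform the move near $S$ and slide back. The remaining point is to check that these slides commute with the twist insertion. This amounts to the fact that a full twist on a bundle is central in the braid group, so it can be conjugated along the path without changing anything outside $D$.
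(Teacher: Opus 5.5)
\textbf{There is a genuine gap.} The step that does not go through is reading Lemma~\ref{lemma:feet_reduction} ``from right to left'' on the configuration you get after adding $b_0$.

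That lemma does not say that a full twist of bands near their feet is $4$-move equivalent to the untwisted bundle. The remark after its proof says the full twist is \emph{transferred onto $S$} by an isotopy, and the individual band twists this creates are then removed by Lemma~\ref{lemma:clasp}. So its right-hand side still carries a full twist, most naturally read as a twist of the two arcs of $S$ flanking the group of feet. This is the twist whose sign one $4$-move flips in the proof of Lemma~\ref{lemma:feet}. To run the lemma backwards, that twist of $S$ must already be present. A trivial band $b_0$ introduced by isotopy alongside the bundle does not supply it: applying the lemma to $b_0,b_1,\ldots,b_m$ just produces a twist of $S$ flanking the larger group.

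A sanity check confirms the problem. For bands whose feet are already consecutive on $S$, your reverse step uses nothing about $b_0$. If it were valid, it would apply verbatim to $b_1,\ldots,b_m$ alone and insert a full twist with no new band at all, changing the parity of the $\theta(v_i,v_j)$ by $4$-moves alone. The paper's $4$-moves on bands only change these values by even amounts. Changing the parity is exactly what the new band pays for.

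The missing idea is that the new band \emph{is} that twist of $S$. The paper's own proof is only the remark that the lemma ``uses Lemma~\ref{lemma:feet_reduction} and is easy to see'', so what follows is a reconstruction of how that step goes.
\begin{itemize}
\item Isotope the disk bounded by $S$ so that the arc carrying the consecutive feet lies on a small tab joined to the rest of the disk by a neck. Then rotate the tab by one full turn about the direction of the bands. This is an isotopy.
\item The rotation puts a full twist on the bundle as a whole. That means the full twist between the bands, of the sign given by the direction of rotation, plus a full framing twist on each band, which framing moves remove.
\item It also puts a compensating full twist on the neck. As a $2$-string tangle, a full twist of the two boundary arcs of the neck is a clasp: one arc hooks the other.
\item This clasp is the image under $k$ of a single new edge of $G$. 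Its feet lie on $S$ on either side of the group of feet, and it runs over or under the bundle near the feet. That is the ``new band crossing them'' of Fig.~\ref{fig:band_twist}.
\end{itemize}

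Your last paragraph is unnecessary, because the move is only ever applied to bands with consecutive feet; non-adjacent bands are handled by the $b_i,b,b_j$ trick. It is also incorrect. Dragging part of a bundle toward $S$ does not give it feet on $S$, so centrality of the full twist in the braid group never puts you in the setting of Lemma~\ref{lemma:feet_reduction}.
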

\begin{proof} The proof uses Lemma \ref{lemma:feet_reduction} and is easy to see. 
\end{proof}

Notice that in Fig. \ref{fig:band_twist} we provide a preview of the effect on symplectic basis of $H_1(K)$  that we will discuss later. Notice that the sign of the clasp and whether or not the new band is above or below the bands we are treating is obviously a matter of free choice.

\begin{lemma} \label{lemma:feet}
The 4-move induces a 4-move on bands as in Fig. \ref{fig:feet}. Such a move will be referred to us a {\bf 4-move on bands.}
\begin{figure}[!htb]
	\centering
  \includegraphics[width=.7\textwidth]{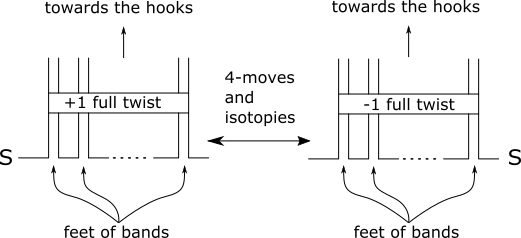}
	\caption{The 4-move induces a 4-move on bands}
	\label{fig:feet}
\end{figure}
\begin{proof} The proof follows by observing that the sign of the full twist on the right hand side of Fig. \ref{fig:feet_reduction} can be changed by one $4$-move. 
\end{proof}
 \end{lemma}
 
Now we start thinking about a certain Seifert surface for our knot projection. Let $K=k(G)$ by any knot (with $G$ contained). Using isotopies as in Fig.\ref{fig:symplectic} 
\begin{figure}[!htb]
	\centering
  \includegraphics[width=.6\textwidth]{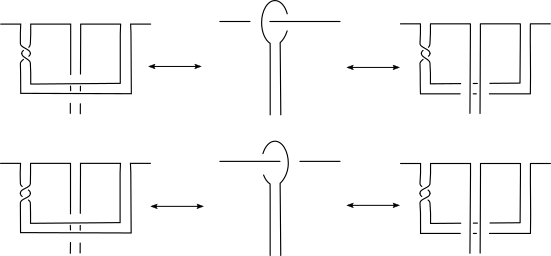}
	\caption{Isotoping the hook of a band so as to create a pair of handles of a Seifert surface. Notice that while the type of the hook determines whether we have a positive or negative full twist on the U-shaped handle we can choose whether the two handles over or under-cross as we please. }
	\label{fig:symplectic}
\end{figure}

we can attach handles to the disk $D$ bounded by $S$ to get a starting Seifert surface $\Sigma$ for our knot as indicated by Fig. \ref{fig:basis} which also shows a symplectic basis $v_i,u_i \in H_1(\Sigma)$ for $i=1,\ldots,n$. 
\begin{figure}[!htb]
	\centering
  \includegraphics[width=.5\textwidth]{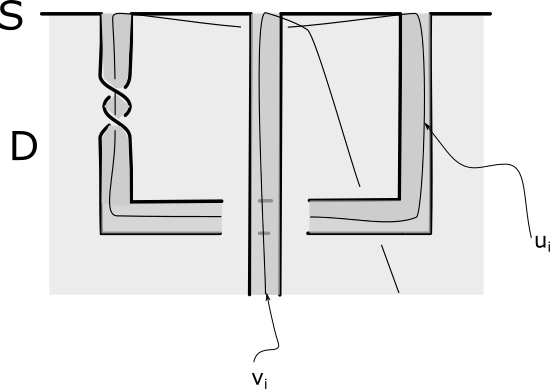}
	\caption{A symplectic basis for $H_1(\Sigma)$}
	\label{fig:basis}
\end{figure}

\section{Alexander polynomial and the $4$-move}\label{section:alexander}

We now discuss the consequences of our geometric observations from Section \ref{section:geometric} for the Seifert pairing in terms of the basis constructed in Fig. \ref{fig:basis}:

\begin{equation} \label{seifert_pairing_map}
\theta \colon H_1(\Sigma) \times H_1(\Sigma) \rightarrow  \mathbb{Z}.
\end{equation}

The {\bf framing move}, {\bf 2-move on bands} and {\bf 4-move on bands} are aggregate 4-moves and isotopies and can be used to change $\theta$ while staying within the same 4-move equivalence class.

The {\bf framing move} of Lemma \ref{lemma:clasp} implies that for $i=1,\ldots,n$ the values $\theta(v_i,v_i)$ can be changed at will after applying 4-moves to our knot.  

The {\bf 4-move on bands} from Lemma \ref{lemma:feet} implies that for $i\ne j$ the values $\theta(v_i,v_j)$ can be changed modulo 2. This is obvious if the bands $b_i,b_j$ of the two basis elements have adjacent feet. If $b_i$ and $b_j$ are non-neighboring let $b$ be the collection of bands standing between them. Now apply the {\bf 4-move on bands} on $b_i,b,b_j$ and once more in reverse on $b_i,b$ to undo the collateral damage from the first one and we are done. 

Any band $b_i$ splits $S$ in two segment $S_l$ and $S_r$ which we might think of as the left- and the right-hand side segments if we use the corresponding arc orientation of $G$ . Any other band $b_j$ either has both ends on the same or on different segments. In the former case we will say that $a_i$ and $a_j$ are {\bf parallel} or {\bf non-interacting} (and write $a_i \parallel a_j$) and in the latter we will say that they are {\bf non-parallel} or {\bf interacting} or {\bf cross} (and write $a_i \perp a_j$). Obviously if $a_i \parallel a_j$ we have $\theta(v_j,v_i)=\theta(v_i,v_j)$ and if $a_i \perp a_j$ then $|\theta(v_j,v_i)-\theta(v_i,v_j)|=1$. 

By applying the {\bf 4-move on bands} from Lemma \ref{lemma:feet} as many times as necessary  we can assume that $\theta(v_j,v_i)=\theta(v_i,v_j)=0$ or $1$ if  $a_i \parallel a_j$. Following that with a single application of the {\bf 2-move on bands} of Lemma \ref{lemma:band_twist} we can assume $\theta(v_j,v_i)=\theta(v_i,v_j)=0$ or $1$ as we please  $a_i \parallel a_j$. In other words $4$-moves change the linkings modulo $2$ and by the {\bf 2-move on bands} we can choose to change the parity as we please at the expense of introducing a new band crossing them.

Similarly by applying the {\bf 4-move on bands} as many times as necessary we can assume and $\theta(v_i,v_j) = 0 \wedge \theta(v_j,v_i)=1$ or  $\theta(v_i,v_j) = -1 \wedge \theta(v_j,v_i)=0$ for $i>j$, if $\alpha_i\perp \alpha_j$. Following that with a single application of the {\bf 2-move on bands} we can then achieve any of the two options.  We can set these values by inducing positive or negative full twists between two bands. If $b_i$ is adjacent to $b_j$ this is obvious else if $b$ stands for the intermediate bands we can apply the moves once more in reverse on $b_i,b$. Notice that the pairs of new bands introduced with each application of Lemma \ref{lemma:band_twist}  already satisfy the conditions they affect on the bands in question as can be read off of Fig. \ref{fig:band_twist}.

Applying {\bf framing moves} we can assume $\theta(v_i,u_i)=0$ and set each $\theta(u_i,v_i)$  and $\theta(u_i,u_i)$ equal to $\pm 1$ as we please. By setting $\alpha_{i,j} = \theta(v_i,v_j)$ we now have:

\begin{lemma} \label{lemma:seifert}
Every knot is 4-move equivalent to a knot with a Seifert matrix of the form:

$$V= \begin{bmatrix}
 A& 0  \\
 \Delta_{\epsilon} & \Delta_{\delta} \\
\end{bmatrix}$$
where $A$ has entries $a_{i,j}$ such that:  
\begin{itemize}
\item the $a_{i,i}$'s can be set at will,
\item ($a_{i,j},a_{j,i}) =(1,0)$, $(0,-1)$, chosen at will (interacting case) or $a_{i,j}=a_{i,j}=0$,$1$, chosen at will (non-interacting case)  for $i \neq j$ and all being changebale module $2$, 
\item $\Delta_{\epsilon}$ (resp. $\Delta_{\delta}$) is a diagonal matrix whose entries $\epsilon_i$ (resp. $\delta_i$) can each be set to $+1$ or $-1$ at will $\forall i$.
\end{itemize}
\end{lemma}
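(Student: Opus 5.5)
Lemma \ref{lemma:contained} lets us start from a contained graph $G$ with $k(G)=K$, and the isotopy of Fig.~\ref{fig:symplectic} turns each band $b_i$ into a pair of handles attached to the disk $D$. The result is a Seifert surface $\Sigma$ of genus $n$ with the symplectic basis $v_1,\ldots,v_n,u_1,\ldots,u_n$ of Fig.~\ref{fig:basis}. Order the basis as $(v_1,\ldots,v_n,u_1,\ldots,u_n)$ and write $V$ in block form. Each $u_i$ runs once around the U-shaped handle of its own hook, and all such handles live in a small neighbourhood of that hook. So $u_i$ links nothing except its own partner $v_i$.

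\textbf{Step 1: the off-diagonal blocks.} We first read off the shape of the blocks from the geometry. For $i\neq j$ the curve $u_j$ bounds a small disk near hook $j$ that meets neither $v_i$ nor $u_i$ nor their push-offs. Hence $\theta(u_j,v_i)=\theta(v_i,u_j)=\theta(u_i,u_j)=0$. The lower-right block is therefore diagonal with entries $\theta(u_i,u_i)$, and the lower-left block is diagonal with entries $\theta(u_i,v_i)$. The upper-right block is diagonal with entries $\theta(v_i,u_i)$. Since $\theta(u_i,v_i)-\theta(v_i,u_i)=\pm1$ by the intersection form, one of the two numbers is forced once the other is fixed.

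\textbf{Step 2: the hook entries.} Apply framing moves (Lemma~\ref{lemma:clasp}) to each hook. Changing the clasp sign and the half twists on the U-handle changes $\theta(u_i,u_i)$ and the relative orientation of the clasp. We use this freedom, together with the freedom noted in Fig.~\ref{fig:symplectic} to choose over- or under-crossing of the two handles, to achieve three things: $\theta(v_i,u_i)=0$, $\theta(u_i,v_i)=\epsilon_i=\pm1$, and $\theta(u_i,u_i)=\delta_i=\pm1$, all as we please. This makes the upper-right block $0$, the lower-left block $\Delta_\epsilon$, and the lower-right block $\Delta_\delta$. Twisting the U-handle does not affect the curves $v_j$, because the $v_j$ pass only through the bands $b_j$. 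So this step does not disturb $A$. The framing move on the band $b_i$ itself sets $a_{i,i}=\theta(v_i,v_i)$ at will.

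\textbf{Step 3: the block $A$.} This is the argument of the preceding paragraphs, done pair by pair. In the parallel case, $A$ is symmetric at $(i,j)$. The 4-move on bands (Lemma~\ref{lemma:feet}, using the intermediate bands $b$ and the reverse move on $b_i,b$ when the feet are not adjacent) shifts $a_{i,j}=a_{j,i}$ by $2$, reducing it to $0$ or $1$. One 2-move on bands (Lemma~\ref{lemma:band_twist}) then fixes the parity. In the crossing case, $a_{j,i}-a_{i,j}=\pm1$, and the same two moves reduce the pair to $(1,0)$ or $(0,-1)$ at will. The new bands created by Lemma~\ref{lemma:band_twist} carry a new hook and new basis elements. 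These are themselves normalized by Step 2 and by further applications of Step 3.

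\textbf{Main obstacle: termination.} Each 2-move on bands adds a band, and that band interacts with existing bands, so new off-diagonal entries of $A$ appear. We need the process to stop. The plan is to use the observation after Lemma~\ref{lemma:band_twist}: the entries between a new band and the bands it crosses already have the required normalized form, as can be read off Fig.~\ref{fig:band_twist}. We therefore process the pairs $(i,j)$ of original bands in a fixed order. At each stage we only require that pairs already treated stay normalized and that new pairs are born normalized. The delicate point is that a later move on $b_i,b$ can change entries $\theta(v_i,v_k)$ for intermediate $b_k$. The reverse move is what undoes this, and it only changes those entries by even amounts or restores them exactly. This has to be checked carefully against Fig.~\ref{fig:feet}, and it is where I expect the main work to lie.
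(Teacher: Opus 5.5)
Your proposal follows essentially the same route as the paper. Framing moves (Lemma~\ref{lemma:clasp}), together with the over/under freedom of Fig.~\ref{fig:symplectic}, normalize the hook entries and the diagonal of $A$. The off-diagonal entries of $A$ are then adjusted by even amounts with 4-moves on bands, using the reverse move on $b_i,b$ when the feet are not adjacent, and a single 2-move on bands (Lemma~\ref{lemma:band_twist}) fixes the parity.

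The termination point you flag as the main obstacle is handled in the paper by exactly the two assertions you invoke, and by nothing more. The first is that the reverse move on $b_i,b$ undoes the collateral change. The second is that the pairs created by each 2-move on bands are born normalized, as read off Fig.~\ref{fig:band_twist}. Your Step~1 justification of the block shape is, if anything, more explicit than the paper's.
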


Notice that the framing move just changes the framing of arcs and the sign of their end in $G$ while the 4-move introduces twists between the bands near their feet but preserves the number of arcs of $G$ and their attachment data. The 2-move on bands introduces a full positive or negative twist between two bands but at the expense of introducing a new edge in $G$. They all manipulate the algebra of the Seifert matrix but produce a much more complicated knot projection than the one we started out with.

Our main theorem follows from the following lemma:
\begin{lemma}
Let $$V= \begin{bmatrix}
 A& 0  \\
 \Delta_{\epsilon} & \Delta_{\delta} \\
\end{bmatrix}$$ 

\noindent be an $n\times n$ which is $2\times2 $ block matrix as in Lemma \ref{lemma:seifert} with $a_{i,j}\equiv 0 \mod{2}$  for $i \neq j$ in the non-interacting case. Then there are values $a_{i,j}$ such that: $$|\det(V-tV^\top)| = t^n$$.
\end{lemma}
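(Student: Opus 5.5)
We first reduce the determinant to an $m\times m$ one, where $V$ is $2m\times 2m$ with blocks of size $m$. Transposition fixes the diagonal blocks $\Delta_\epsilon,\Delta_\delta$, so
$$V-tV^\top=\begin{bmatrix} A-tA^\top & -t\Delta_\epsilon\\ \Delta_\epsilon & (1-t)\Delta_\delta\end{bmatrix}.$$
The two lower blocks are diagonal and hence commute. The standard identity $\det\begin{bmatrix}P&Q\\R&S\end{bmatrix}=\det(PS-QR)$ for $RS=SR$ then gives
$$\det(V-tV^\top)=\det\big((1-t)(A-tA^\top)\Delta_\delta+t\Delta_\epsilon^2\big).$$
Since $\Delta_\epsilon^2=I$ and $\det\Delta_\delta=\pm1$, this equals, up to sign,
$$\det M,\qquad M=(1-t)(A-tA^\top)+t\Delta_\delta .$$
The target is therefore $\det M=\pm t^m$, where $m$ is the number of bands; this is the normalisation the statement should be read with. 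Note that the $\epsilon_i$ have disappeared, while the $\delta_i$ remain as free signs.

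Next we write out the entries of $M$ using the freedom in Lemma \ref{lemma:seifert}.
\begin{itemize}
\item The diagonal entry is $M_{ii}=(1-t)^2a_{i,i}+t\delta_i$.
\item For a non-interacting pair we have $a_{i,j}=a_{j,i}\equiv 0 \pmod 2$. We take them equal to $0$, so $M_{ij}=M_{ji}=0$.
\item For an interacting pair with $(a_{i,j},a_{j,i})=(1,0)$ we get $M_{ij}=1-t$ and $M_{ji}=-t(1-t)$. The choice $(0,-1)$ gives $M_{ij}=t(1-t)$ and $M_{ji}=-(1-t)$.
\end{itemize}
Thus $M=t\Delta_\delta+(1-t)C$, where $C$ is supported on the edges of the interaction graph $\Gamma$ (vertices are bands, edges join crossing bands). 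Each edge contributes an off-diagonal pair whose product is always $-t(1-t)^2$, whichever orientation is chosen.

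For the determinant we plan to expand $\det M$ over permutations. Only permutations whose nontrivial cycles run along cycles of $\Gamma$ contribute. A transposition along an edge $\{i,j\}$ contributes $+t(1-t)^2$ times the product of the remaining factors, after accounting for its sign. With $a_{i,i}=0$ each diagonal factor is $t\delta_i$. The identity permutation then gives exactly $t^m\prod\delta_i$, so everything hinges on cancelling the remaining terms.

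Our main device is the diagonal entries $a_{i,i}$, which are completely free. We proceed by induction on $m$ using Schur complements. Choose an ordering of the bands and pivot on the first diagonal entry. We then aim to choose $a_{1,1}$ and $\delta_1$ so that the pivot $M_{11}$ is a unit times a power of $t$ (for instance $a_{1,1}=0$, giving $t\delta_1$). The Schur complement $M'=M_{\hat1\hat1}-M_{\cdot1}M_{11}^{-1}M_{1\cdot}$ then differs from a matrix of the same shape only by the terms $\pm(1-t)^2$ on entries indexed by neighbours of band $1$ in $\Gamma$.

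The key step is to absorb these corrections into the still-free diagonal values $a_{k,k}$ (the diagonal corrections carry exactly the factor $(1-t)^2$ that multiplies $a_{k,k}$) and into the mod-2-adjustable off-diagonal entries. This should make $M'$ again of the form $t\Delta'+(1-t)C'$, so that induction yields $\det M=\pm t^m$. A useful sanity check is that at $t=1$ we get $\det M=\prod\delta_i=\pm1$, consistent with $\Delta(1)=\pm1$.

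The main obstacle is that this absorption is not obviously possible. The off-diagonal corrections between two neighbours $k,l$ of the pivot are of the form $(1-t)^2\cdot(\text{unit})/t$. This is not of the shape $(1-t)(x-ty)$ with small integers $x,y$, and the interacting entries are pinned to the two options $(1,0)$ and $(0,-1)$. If the direct induction fails, we would first try to order the bands so that $\Gamma$ is traversed from leaves inward, so that each pivot has at most one remaining neighbour and no off-diagonal corrections arise. Failing that, we would use the 2-move on bands (Lemma \ref{lemma:band_twist}) to add auxiliary bands that make $\Gamma$ bipartite or tree-like before choosing the $a_{i,i}$.
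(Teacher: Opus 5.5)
Your reduction to $\det M$ with $M=(1-t)(A-tA^\top)+t\Delta_\delta$ is correct and is the paper's first step. Your Schur-complement pivot on $M_{11}$ is the same operation as the paper's $(1-t)$-column shears. The gap is that you never close the inductive step, and your reason for doubting it is mistaken.

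The corrections $-M_{k1}M_{1l}/M_{11}$ fail to have the shape $(1-t)(x-ty)$ only if you mix the options $(1,0)$ and $(0,-1)$ among the neighbours of the pivot; that is where $(1-t)^2/t$ or $t(1-t)^2$ come from. With a uniform choice each correction is $\pm(1-t)^2=(1-t)(\pm1\mp t)$, which has exactly that shape with $x=y$.

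Your fallbacks do not rescue the argument. A leaves-inward ordering exists only when $\Gamma$ is a forest. A 2-move on bands adds vertices and edges to $\Gamma$ but cannot remove cycles already present among the original bands.

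The missing idea, which is how the paper closes the induction, is to make the choice at the pivot uniform. Take $a_{1,1}=0$, $\delta_1=1$, and $a_{1,i}=0$ for every $i\neq 1$. That is, $(a_{1,i},a_{i,1})=(0,1)$ for each band $i$ interacting with band $1$, and $a_{1,j}=a_{j,1}=0$ for the others.

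Then the first row is $t$ followed by entries $-t(1-t)$ or $0$, and the first column is $t$ followed by $1-t$ or $0$. So every correction equals $+(1-t)^2$ on the whole block $P\times P$, diagonal and off-diagonal alike, where $P$ is the set of neighbours of band $1$. Outside $P\times P$ the corrections vanish.

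Now $(1-t)^2=(1-t)(1-1\cdot t)$ is exactly the change in $M_{kl}=(1-t)(a_{kl}-a_{lk}t)$ when $a_{kl}$ and $a_{lk}$ are both raised by $1$. On the diagonal it is likewise the change from raising $a_{kk}$ by $1$. So you absorb it by presetting $a_{kl}:=\alpha_{kl}-1$ for all $k,l\in P$, where the $\alpha$'s are the values supplied by the induction hypothesis for the remaining $n-1$ bands. The Schur complement is then exactly the matrix for those bands, and $\det M=\pm t\cdot t^{n-1}$.

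This shift preserves $a_{kl}-a_{lk}$. For an interacting pair the preset value is again of the admissible form $(c+1,c)$, e.g.\ $(0,-1)$ in place of $(1,0)$, as allowed by Lemma~\ref{lemma:seifert}. So the entries are not ``pinned'' in the way you feared.

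One point you would still have to handle when writing this out, and which the paper's write-up does not address explicitly: for a non-interacting pair $k,l$ inside $P$, the shift flips the parity of $a_{kl}=a_{lk}$. 4-moves on bands alone cannot do this, so the evenness hypothesis is not automatically inherited by the inductive step.
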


\begin{proof}
By Theorem 3 of \cite{block}, on the determinant of $2\times 2$ block matrices, we have $\det(V-tV^\top) = \det([(\alpha_{i,j}-\alpha_{j,i}t)(1-t)\delta_{j}+\delta_{i,j}t]_{n\times n})$. Here $\delta_{i,j}$ is the Kronecker delta. By multiplying each column by $\delta_j$, since $\delta_j^2=1$, this is then up to sign equal to $\det([(\alpha_{i,j}-\alpha_{j,i}t)(1-t)+\delta_{j}\delta_{i,j}t]_{n\times n})$. We will now proceed by induction using elementary row and column operations. 

For $n=1$ our determinant is simply $\alpha_{1,1}(1-t)^2+\delta_it$. So $\alpha_{1,1}=0 \wedge \delta_i=\pm1$ does it which we achieve by {\bf framing moves}.

Working out the case $n=2$ will teach us the mechanics of the process. Our matrix now looks as follows:
$$
\begin{bmatrix}
  \alpha_{1,1}(1-t)^2+\delta_1t& (\alpha_{1,2}-\alpha_{2,1}t) (1-t) \\
 (\alpha_{2,1}-\alpha_{1,2}t) (1-t) & \alpha_{2,2}(1-t)^2+\delta_2 t\\
\end{bmatrix}
$$
We distinguish two cases: either $b_1\parallel b_2$ or $b_1\perp b_2$ and examine them separately. 

\begin{case}
         Suppose that $b_1\parallel b_2$.
\end{case}

 Then $\alpha_{1,2} = \alpha_{2,1}$ and by the {\bf 4-move on bands} we can change them at will modulo 2. 

Since $\alpha_{1,2} = \alpha_{2,1} \equiv 0 \mod 2$ we set $\alpha_{1,1} = \alpha_{2,2} = 0$ and $\delta_1 = \delta_2 =\pm 1 $ using {\bf framing moves} and we are done.

\begin{case}
         Suppose that $b_1\perp b_2$.
\end{case} 

Using {\bf 4-moves on bands} we can set $\alpha_{1,2} =0 \wedge \alpha_{2,1} =1$ or $\alpha_{1,2} =-1 \wedge \alpha_{2,1} =0$. Now our matrix looks like:

$$
\begin{bmatrix}
  \alpha_{1,1}(1-t)^2+\delta_1t& -t (1-t) \\
  (1-t) & \alpha_{2,2}(1-t)^2+\delta_2 t\\
\end{bmatrix}
$$

We now finish by setting $\alpha_{i,i}=i-1\wedge \delta_i=-1$ for $i=1,2$ and perform obvious determinant preserving elementary operation ($(1-t)$-shears) to make the matrix lower diagonal with $\pm t$'s on the diagonal. 
Now assume it works for $n-1$ and lets work with a knot with $n$ handles (contained and so that parallel bands link evenly).

From our discussion so far we can choose a $4$-move equivalence class representative such that our matrix $V-tV^\top$ looks like:

\begin{equation} \label{induction}
V-tV^\top =
\left[
\begin{array}{c|cc}
t & -t(1-t)\mathbf{1}^T & \mathbf{0}^T \\ \hline
(1-t)\mathbf{1} & M_{11} & M_{12} \\
\mathbf{0} & M_{21} & M_{22}
\end{array}
\right]
\end{equation}
where $\mathbf{1}$ is a column vector of 1’s of appropriate size and $\mathbf{0}$ is zeros. To see that notice that the first band in our band enumeration which corresponds to our basis for $H_1(K)$ is perpendicular to some and parallel to other bands. Using $4$-moves, for bands $i$ to which it is perpendicular set $\alpha_{1i}=0$ and $\alpha_{i1}=1$ and for bands $j$ to which it is parallel set $\alpha_{1j}=\alpha_{j1}=0$.

If band one is parallel to all others then the first row of our matrix in equation \ref{induction} contains only $0$'s off diagonal and induction applies. If there are bands perpendicular to band one then we can apply $(1-t)$-shears by adding $(1-t)C_1$ to those columns $C_i$ of  $V-tV^\top$  which start with $-t(1-t)$. Notice that, in doing so, we are adding $(1-t)^2$ to $M{11}$ so in order for induction to apply we use $4$-moves to set all $\alpha_{ij}^{'}$ involved in $M{11}$ to $\alpha_{ij}-1$ where $\alpha_{ij}$'s are the ones our induction hypothesis asserts and can be set by $4$-moves.
\end{proof}

\section{Discussion}

As in the unknotting of the $2$--cable of the trefoil in \cite{nikos1, nikos2}, where it seemed necessary to pass through diagrams of higher complexity, the present work likewise requires introducing substantial geometric complexity via $4$--moves in order to locate a representative of the $4$--move class with trivial Alexander polynomial.

Define the \emph{$4$--move height} of a knot as follows. For every $4$--move unknotting movie, consider the maximum number of crossings appearing in the movie; take the minimum of these maxima over all movies, and subtract the crossing number of the original knot. Even if Nakanishi's conjecture is correct, this height may grow rapidly as a function of classical invariants of the knot (crossing number, unknotting number, etc.), reminiscent of the behavior of Dehn's function in geometric group theory, making it hard to prove. One may likewise define a \emph{$4$--move Alexander--polynomial height}.

The constructions here do not yield any upper bound on the Alexander--polynomial height, even for knots with fixed classical unknotting number. For $n=1$, for example, our map $k$ essentially produces Whitehead doubles. The subsequent $4$--move simplification does produce a representative with trivial Alexander polynomial, but only by increasing the crossing number by four times the writhe of the underlying Whitehead--double diagram. In my view, this only deepens the mystery surrounding the $4$--move puzzle.

In our $n$--graph model, the edges $e_i$ have their feet on the base circle $S$, and crossing changes among different $e_i$'s (more precisely, among their images in $k(G)$) are simply $C_3$--moves (clasp-pass moves). Self crossings of the $e_i$'s are induced by a pair of $C_2$--moves. One might try to find cases where $4$--moves induce $C_2$ or $C_3$ moves. I can establish that they induce $C_3$ moves only conditionally and only in local configurations near the feet of the bands. This suffices to trivialize the Alexander polynomial, but I do not see how to extract anything stronger. Also, at present, I can generate $C_2$--moves only in very special configurations, one of which appears in the unknotting movies of \cite{nikos1, nikos2}. 

\bibliography{simple}

\end{document}